\newcommand{\be}{\begin{equation}}
\newcommand{\ee}{\end{equation}}
\newcommand{\bqs}{\begin{equation*}}
\newcommand{\eqs}{\end{equation*}}
\newcommand{\rrrrr}{r_1}
\newcommand{\TTTTT}{T_1}
\newcommand{\WWWWW}{W_1}
\newcommand{\e}{\varepsilon}
\renewcommand{\O}{\mathcal{O}}
\numberwithin{equation}{section}
\theoremstyle{plain}
\newtheorem{theorem}{Theorem}[section]
\newtheorem{lemma}[theorem]{Lemma}
\newtheorem{rmk}[theorem]{Remark}
\title{Pushed fronts in a Fisher-KPP-Burgers system using geometric desingularization}
\author{Matt Holzer, Matthew Kearney, Samuel Molseed, Katie Tuttle and David Wigginton}
\affil{\small Department of Mathematical Sciences, George Mason University, Fairfax, VA, USA}
\begin{document}
\maketitle

\begin{abstract} We study traveling fronts in a system of reaction-diffusion-advection equations in one spatial dimension motivated by problems in reactive flows.  In the limit as a parameter tends to infinity, we construct the approximate front profile and determine the leading order expansion for the selected wavespeed.  Such fronts are often constructed as transverse intersections of stable and unstable manifolds of the traveling wave differential equation.  However, a re-scaling of the dependent variable leads to a lack of hyperbolicity for one of the end states making the definition of one such manifold unclear.  We use geometric blow-up techniques to recover hyperbolicity and following an analysis of the blown up vector field are able to show the existence of a traveling front with a leading order expansion of its speed.  

\end{abstract}

{\noindent \bf Keywords:} pushed fronts, geometric singular perturbation theory, geometric desingularization \\


\section{Introduction}
The following system of reaction-diffusion-advection equations was recently introduced in \cite{bramburger21},
\begin{eqnarray}
T_t &=& T_{xx}-(uT)_x+T(1-T) \nonumber \\
u_t &=& \nu u_{xx} -uu_x+\rho T(1-T). \label{eq:RDA}
\end{eqnarray}
Here $T(t,x)$ models the temperature of a fluid and $u(t,x)$ represents the velocity of that fluid.  The reaction term in the $T$ component introduces growth of the temperature field due to the occurrence of a chemical reaction.  The temperature increase also induces growth in the velocity profile with some proportionality constant $\rho$.  Both temperature and velocity also are influenced by diffusion and advection.  System (\ref{eq:RDA}) gives rise to traveling front solutions.  Motivated by similar systems of coupled fluid and reaction-diffusion equations such as \cite{constantin08,malham98} the goal of \cite{bramburger21} is to understand how coupling between the two variables affects the speed of these fronts.  A more detailed description of model (\ref{eq:RDA}) and its relation to other models of reactive flow can be found in \cite{bramburger21}. 

\paragraph{Traveling wave equation} The authors in \cite{bramburger21} study traveling fronts for (\ref{eq:RDA}) with a particular interest in how the speeds of these fronts depend on the parameters $\rho$ and $\nu$.  The purpose of this paper is to study fronts in the inviscid problem $(\nu=0)$ and in the limit as $\rho\to\infty$.  When $\nu=0$ the traveling wave equations for (\ref{eq:RDA}) form a system of three first-order ordinary differential equations,
\begin{eqnarray}
T'&=& -\tilde{c} T +UT +V \nonumber \\
U'&=& \frac{\rho}{U-\tilde{c}}T(1-T) \nonumber \\
V' &=& T(T-1). \label{eq:TWnotreduced} 
\end{eqnarray}
This system has a conserved quantity; $\frac{1}{2}U^2-\tilde{c}U +\rho V$ which after noting that we are interested in solutions converging to zero can be used to reduce (\ref{eq:TWnotreduced}) a planar system;  see \cite{bramburger21}
\begin{eqnarray}
   T' &=& -\tilde{c}T+UT+\frac{1}{2\rho}U(2\tilde{c}-U) \nonumber\\
    U' &=& \frac{\rho}{U-\tilde{c}}T(1-T). \label{eq:mainTW} 
\end{eqnarray}
\paragraph{Front and wavespeed selection} 
Traveling front solutions to (\ref{eq:RDA}) can be found by locating heteroclinic orbits of (\ref{eq:mainTW}).   We are interested in fronts connecting the state $(1,\tilde{c}+\rho-\sqrt{\tilde{c}^2+\rho^2})$ to the zero state $(0,0)$.  Since the zero state is unstable as a solution of (\ref{eq:RDA}) this is a problem in the study of fronts propagating into unstable states; see for example \cite{vansaarloos03}.  These fronts and the corresponding heteroclinic orbits exist for a continuum of wavespeeds.  This is most easily observed in (\ref{eq:mainTW}) after noting that $(0,0)$ is a stable node while $(1,\tilde{c}+\rho-\sqrt{\tilde{c}^2+\rho^2})$ is a saddle.   Therefore, if a heteroclinic (equivalently front) exists for some wavespeed then it must also exist for an open set of nearby speeds.  Among this family of fronts one is typically interested in identifying the {\em selected} or {\em critical} front (its speed is called the {\em selected wavespeed}).  This front is the one that attracts compactly supported initial conditions of the PDE (\ref{eq:RDA}) \footnote{Such a front does not have to be unique.  A simple example is Nagumo's equation where different speeds can be observed for non-negative and non-positive initial data. However, even if one restricts to positive, but compactly supported initial data there can be multiple fronts which can be selected by compactly supported initial data; see for example \cite{faye19}.}.  Critical fronts are defined and identified in the literature using several different criterion.  In different settings the critical front is defined as the slowest positive front, the slowest monotone front, the marginally stable front, or directly as the front whose basin of attraction includes compactly supported initial conditions.  Although not generally the case, it happens to be true that for many systems of PDEs, these different criterion all identify the same traveling front solution.

\paragraph{Pushed and pulled fronts} Typically the critical front can be classified as being of one of two types: pulled or pushed.  Pulled fronts are driven by the instability ahead of the front interface and their spreading speed equals that of compactly supported initial conditions for the equation linearized near the unstable steady state.  Pushed fronts are driven by nonlinear effects behind the front interface and propagate at (typically, see \cite{holzer12}) faster-than-linear speeds.  As we mentioned above, determining the front selected by compactly supported initial data can be associated to stability properties of the fronts; see \cite{avery22,dee83,vansaarloos03}.  Pulled fronts have essential spectrum that touches the imaginary axis in an optimally chosen exponentially weighted function space while pushed fronts have a zero translational eigenvalue and stable essential spectrum in the weighted space.  This translational eigenvalue has an eigenfunction given by the derivative of the front profile and so for this function to be in the weighted space requires strong exponential decay of the front.    We refer the interested reader to \cite{vansaarloos03} for an in-depth discussion of pushed and pulled fronts and we note recent work on pushed and pulled fronts and the transition between them; see \cite{an21,avery23}.    

 Bringing the discussion back to the traveling wave equation (\ref{eq:mainTW}), we note that restricting our search to traveling fronts with strong decay near the origin is equivalent to locating heteroclinic orbits involving intersections of the one-dimensional unstable manifold of the saddle state at $(1,\tilde{c}+\rho-\sqrt{\tilde{c}^2+\rho^2})$ and the one dimensional strong-stable manifold of the origin.  Such a heteroclinic connection is not structurally stable under perturbations and we expect to identify a unique speed for which such a connection occurs.

\paragraph{Constructing pushed fronts: singular perturbations} The explicit construction of heteroclinic orbits for nonlinear ordinary differential equations is difficult.  Most examples where pushed invasion speeds can be  calculated involve scalar reaction-diffusion equations where explicit solutions can be obtained as in the classical Nagumo's equation; see \cite{hadeler75}.  For systems of reaction-diffusion equations the situation is more challenging still, but one promising avenue is when one or more parameters in the system are asymptotically small (or large).  In these situations, methods from singular perturbation theory (see \cite{fenichel79,jones95}) can be employed to construct solutions and wavespeed estimates are sometimes possible by analyzing a reduced planar system; see for example \cite{harley14,holzer12,hosono03}. 

\paragraph{Previous work} Among other results, the following estimate of the critical speed in the limit as $\rho\to\infty$  was obtained in \cite{bramburger21}.  

\begin{theorem}\cite{bramburger21}\label{thm:bram} System (\ref{eq:RDA}) with $\nu=0$ has traveling front solutions connecting the stable state $(1,\tilde{c}+\rho-\sqrt{\tilde{c}^2+\rho^2})$ to $(0,0)$ for any $\tilde{c}\geq \tilde{c}^*(\rho)$.  These fronts are monotonic and for $\rho\to\infty$ the critical speed $\tilde{c}^*(\rho)$ satisfies 
\be \sqrt[3]{\frac{3}{2}} \leq \liminf_{\rho\to\infty} \frac{\tilde{c}^*(\rho)}{\rho^{1/3}}\leq \limsup_{\rho\to\infty} \frac{\tilde{c}^*(\rho)}{\rho^{1/3}} \leq \sqrt{3}  \label{eq:speedbramburger} \ee
\end{theorem}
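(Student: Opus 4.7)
The plan is to reduce the problem to the planar traveling-wave system (\ref{eq:mainTW}) and exploit the $\rho^{1/3}$ scaling already present in the statement. Setting $\tilde{c} = C\rho^{1/3}$, $U = V\rho^{1/3}$, and $x = \xi/\rho^{1/3}$ transforms (\ref{eq:mainTW}) into
\begin{equation*}
\frac{dT}{d\xi} = (V-C)T + \frac{V(2C-V)}{2\rho^{2/3}}, \qquad \frac{dV}{d\xi} = \frac{T(1-T)}{V-C},
\end{equation*}
and as $\rho\to\infty$ the rescaled left equilibrium $(1,U_L/\rho^{1/3})$ collapses onto the singular line $V=C$ of the second equation. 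A direct computation shows that the formal limit system admits the conserved quantity
\begin{equation*}
H(T,V) \;=\; \tfrac{1}{3}(V-C)^3 + \tfrac{1}{2}T^2 - T.
\end{equation*}
Existence of a one-parameter family of monotone fronts for all $\tilde{c} \ge \tilde{c}^*(\rho)$ follows from standard phase-plane arguments at fixed $\rho$: the left equilibrium is hyperbolic, its unstable manifold is one-dimensional, and $\tilde{c}^*(\rho)$ corresponds to the smallest speed for which this manifold enters the origin along its strong stable direction.

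For the lower bound, I would use $H$ as an approximate invariant for the perturbed system, with error $O(\rho^{-2/3})$ inherited from the forcing term in the first equation. Evaluating $H$ at the two candidate endpoints $(1,C)$ and $(0,0)$ yields the necessary condition $C^3 = 3/2$. For any $\delta>0$ and $C \le (3/2)^{1/3} - \delta$, I would then track the unstable manifold of $(1,U_L)$ in the $(T,V)$ plane, sandwich it between nearby level curves of $H$ using the smallness of the perturbation, and show that this manifold meets the $T$-axis at a strictly positive value of $T$ once $\rho$ is sufficiently large. This precludes a connection to the origin and, taking $\delta \downarrow 0$, gives $\liminf \tilde{c}^*(\rho)/\rho^{1/3} \ge (3/2)^{1/3}$.

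For the upper bound, I would construct an explicit forward-invariant region in the $(T,V)$ plane that traps the unstable manifold of $(1,U_L)$ and forces it into the origin. Natural candidates are wedges bounded by the coordinate axes and a test curve such as $V = C(1-T)$ or $V = C - \gamma(1-T)^{\alpha}$; the constant $\sqrt{3}$ should emerge as the threshold at which one can directly verify that the rescaled vector field points strictly inward on this boundary for all $\rho$ sufficiently large. Once the unstable manifold is shown to enter any small neighbourhood of the origin, continuation in the speed parameter promotes the constructed supercritical front to the critical one and gives $\limsup \tilde{c}^*(\rho)/\rho^{1/3} \le \sqrt{3}$.

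The main obstacle is the loss of hyperbolicity of the left equilibrium in the limit: $(1,U_L)$ merges with the singular locus $V = C$ of the $V$-equation, so naive perturbation estimates on the unstable manifold degenerate in any tubular neighbourhood of this locus. Both the Lyapunov-style argument for the lower bound and the trapping-region argument for the upper bound must control the trajectory precisely in this near-singular region, where the equations of motion are stiff and the invariant $H$ is only approximately conserved. It is precisely this obstruction that accounts for the gap between $(3/2)^{1/3}$ and $\sqrt{3}$ in (\ref{eq:speedbramburger}), and that the geometric desingularization developed in the body of the present paper is designed to resolve in order to pin down the sharp leading-order coefficient.
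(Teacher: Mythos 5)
First, a point of order: the paper does not prove Theorem~\ref{thm:bram} at all --- it is quoted verbatim from \cite{bramburger21} and used only as a starting point for the sharper result in Theorem~\ref{thm:main}. There is therefore no internal proof to compare against, and I can only assess your argument on its own terms and against the machinery the paper builds for its refinement. Your setup is correct and coincides with the paper's singular-limit analysis: the scaling $\tilde{c}=C\rho^{1/3}$, $U=V\rho^{1/3}$ is the paper's $\e=\rho^{-1/3}$ rescaling, your first integral $H$ reproduces the solution curves (\ref{eq:solutioncurves}), and matching $H(0,0)=-C^3/3$ against $H(1,C)=-1/2$ correctly yields $C^3=3/2$, exactly as in Section~\ref{sec:hetero}.

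As a proof of the two-sided bound, however, there are genuine gaps. For the lower bound, one computes $\frac{dH}{d\xi}=(T-1)\frac{V(2C-V)}{2\rho^{2/3}}$, which is pointwise $O(\rho^{-2/3})$ on bounded sets, but the total drift of $H$ along a heteroclinic is an integral over an infinite time interval, and the time spent near the left equilibrium diverges as $\rho\to\infty$ because both eigenvalues of $(1,w_-(\e))$ tend to zero --- this is precisely the nilpotent degeneracy the paper must blow up. Reparametrizing by $T$ does not rescue the estimate: at the left endpoint $dT/d\xi$ vanishes at the same order as the numerator, so $dH/dT$ is a $0/0$ expression that must be controlled before any ``sandwiching between level curves'' is legitimate. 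You correctly name this obstruction but supply no estimate that closes it, so the bound $\liminf\tilde{c}^*/\rho^{1/3}\geq(3/2)^{1/3}$ is not established. For the upper bound, the constant $\sqrt{3}$ is never derived: on your candidate curve $V=C(1-T)$ the limit vector field satisfies $C\dot{T}+\dot{V}=-C^2T^2-(1-T)/C<0$ for \emph{every} $C>0$, so that boundary is crossed inward regardless of whether $C\gtrless\sqrt{3}$ and cannot be the source of the threshold; identifying the correct barrier and verifying its invariance uniformly in $\rho$ (again including the degenerate corner where $(1,V_L)$ merges with the singular line $V=C$) is the actual content of that half of the theorem. Finally, existence of fronts for all $\tilde{c}\geq\tilde{c}^*(\rho)$ and monotonicity are delegated to ``standard phase-plane arguments'' without constructing the invariant region at fixed $\rho$. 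In short, the skeleton and the singular-limit computation are right, but both bounds currently fail to close exactly in the near-singular region that Section~\ref{sec:blowup} is designed to desingularize.
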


Numerical estimates of the critical speed were obtained in \cite{bramburger21} using methods from \cite{bramburger20} and suggest that the critical speed should scale like the lower bound in (\ref{eq:speedbramburger}).  The purpose of the current research is to refine the estimate in Theorem~\ref{thm:bram} and show that the critical speed does in fact scale with $ \sqrt[3]{\frac{3\rho}{2}}$ as $\rho\to\infty$.   

\paragraph{Proof strategy} We make the following changes of coordinates: $\rho=\frac{1}{\e^3}$, $c=\tilde{c}/\e$, $W=\e U$ and re-scale the independent variable so that (\ref{eq:mainTW}) is transformed to the following system of equations with small parameter $\e$,
\begin{align}
    \dot{T} &= -cT+WT+\frac{1}{2}W\varepsilon^2(2c-W)\nonumber \\
    \dot{W} &=\frac{1}{W-c}T(1-T). \label{eq:scale1}
\end{align}

A general framework for proving the existence of traveling waves (i.e. heteroclinic orbits)  in systems such as (\ref{eq:scale1}) is as follows; see for example \cite{jones95}.  One first sets the small parameter to zero and identifies a heteroclinic orbit in this singular limit.  In the case of (\ref{eq:mainTW}) this heteroclinic involves the intersection of two one-dimensional manifolds  (namely a heteroclinic connecting the unstable manifold of $(1,\tilde{c}+\rho-\sqrt{\tilde{c}^2+\rho^2})$ and the strong-stable manifold of $(0,0)$).  As such, the system is not structurally stable and the heteroclinic is not expected to persist when $\e\neq 0$.  However, if the system has a parameter ($c$ in our case) then one can add the parameter as a variable to increase the dimension of the system and construct two dimensional center-stable and center-unstable manifolds.  If these manifolds intersect transversely and depend smoothly on $\e$ then the transverse intersection will persist for $\e>0$ and the existence of a heteroclinic follows without having to delve into the particular manner in which $\e$ appears in the equations.  

The primary complication occurring in (\ref{eq:scale1}) is that the heteroclinic in the singular limit involves a connection to a non-hyperbolic fixed point.  Thus, even the persistence of this fixed point when $\e\neq 0$ is not apparent and it is not possible to define stable or unstable manifolds of this fixed point without explicitly considering $\e$ dependent terms.  To overcome this issue we will apply geometric desingularization techniques to blow-up the non-hyperbolic fixed point and regain hyperbolicity.  We refer the reader to \cite{dumortier92,kuehn15} for a description of the method.  Related to the problem of front propagation: blow-up has been used to compute correction to wavespeeds due to cutoff in reaction terms; see \cite{dumortier07}. We also point to other examples where re-scalings of the dependent variables lead to a lack of hyperbolicity -- see for example \cite{carter18,gucwa09,holzer17}.

The main result of this paper is the following.  
\begin{theorem}\label{thm:main} Let $\nu=0$ in (\ref{eq:RDA}).  Then there exists a $\rho_0>0$ such that for any $\rho>\rho_0$ there exists a heteroclinic orbit for (\ref{eq:mainTW}) corresponding to a traveling front solution of (\ref{eq:RDA}) connecting the stable state $(1,\tilde{c}+\rho-\sqrt{\tilde{c}^2+\rho^2})$ to $(0,0)$ with speed $\tilde{c}^*(\rho)$ and the following properties.  
\begin{enumerate}
\item The heteroclinic orbit lies in the strong-stable manifold of $(0,0)$ and  the traveling front is monotone decreasing and has steep exponential decay in the sense that
\[  |(T^*(x),Q^*(x))|\leq C e^{-\tilde{c}^*(\rho)x/2} \ \text{as} \ x\to\infty. \]
\item The speed $\tilde{c}^*(\rho)$ satisfies $\lim_{\rho\to\infty} \frac{\tilde{c}^*(\rho)}{\sqrt[3]{\rho}}=\sqrt[3]{\frac{3}{2}}.$
\end{enumerate} 
\end{theorem}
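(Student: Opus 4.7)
The plan is to treat $c$ as a free parameter in (\ref{eq:scale1}), construct a heteroclinic orbit from the PDE stable state to $(0,0)$ for a unique $c = c^*(\e)$, and read off the limit $c^*(\e) \to c_0 = \sqrt[3]{3/2}$; undoing the scalings that relate $(\tilde c, \rho)$ to $(c, \e)$ will then yield the asymptotic claimed in the theorem. The value $c_0$ is forced already at $\e = 0$: dividing the two equations of (\ref{eq:scale1}) gives $dW/dT = (1-T)/(W-c)^2$, and hence the conserved quantity $H(T,W;c) = (W-c)^3/3 - T + T^2/2$. The unstable state $(0,0)$ lies on $\{H = -c^3/3\}$, while the PDE stable state, which collapses at $\e=0$ to the degenerate point $(1,c)$, lies on $\{H = -1/2\}$. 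Equating the two level sets singles out $c_0 = \sqrt[3]{3/2}$.

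For $\e > 0$ the Jacobian of (\ref{eq:scale1}) at $(0,0)$ has eigenvalues $\lambda_\pm = (-c \pm \sqrt{c^2 - 4\e^2})/2$, so $(0,0)$ is a stable node with fast rate $\lambda_- = -c + O(\e^2)$ and slow rate $\lambda_+ = O(\e^2)$. The pushed front is characterized by approaching $(0,0)$ tangent to the fast eigenvector, i.e., along the one-dimensional strong stable manifold $W^{ss}_\e(c)$, which is smooth in $(c,\e)$ down to $\e = 0$ and coincides to leading order with $\{H = -c^3/3\}$ near the origin. At the PDE stable state the Jacobian has trace $O(\e^2)$ and determinant $\sim -2/(c^2 \e^2)$, giving eigenvalues $\pm \sqrt{2}/(c\e)(1 + o(1))$ and a one-dimensional unstable manifold $W^u_\e(c)$ that exists for $\e > 0$ but degenerates as $\e \to 0$. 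The obstacle to a direct regular-perturbation continuation is precisely this blow-up of the hyperbolic rates.

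To restore hyperbolicity uniformly in $\e$, I apply geometric desingularization at $(T, W, \e) = (1, c_0, 0)$ via the blow-up $(T - 1, W - c_0, \e) = r(\bar T, \bar W, \bar\e)$ on $S^2$, appending $dc/dX = 0$. After dividing out the appropriate power of $r$ from the blown-up vector field, the dynamics are analyzed in two overlapping charts. In the rescaling chart $\bar\e = 1$ (so $r = \e$ and $(\bar T, \bar W) = ((T-1)/\e, (W-c)/\e)$), the desingularized PDE stable state becomes a hyperbolic saddle, and its unstable manifold furnishes a smooth $(c, \e)$-family of curves including $\e = 0$. A directional chart (e.g.\ $\bar T = -1$) then propagates the exceptional sphere into the outer region, where the singular orbit $\{H = -c_0^3/3\}$ is normally hyperbolic and Fenichel theory produces an $\e$-close slow manifold.

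The argument closes by a transverse-intersection / Melnikov calculation. I track the desingularized $W^u_\e(c)$ out of the rescaling chart, through the directional chart, and along the Fenichel slow manifold to a fixed transversal near $(0,0)$, and measure its signed displacement $\Delta(c,\e)$ from $W^{ss}_\e(c)$. By the matching in the first paragraph $\Delta(c_0, 0) = 0$, and $\partial_c \Delta(c_0, 0)$ evaluated as a Melnikov-type integral of $\partial_c H$ along the singular orbit is nonzero, so the implicit function theorem produces a unique $c^*(\e) = c_0 + o(1)$ at which $W^u_\e \cap W^{ss}_\e \neq \emptyset$. Monotonicity follows from the monotone geometry of the singular level set, and the tangency at $(0,0)$ to the fast eigenvector gives decay at rate $c^*(\e) + o(1)$ in $X$, which translates to decay at rate $\tilde c^*(\rho)(1 + o(1))$ in $x$, comfortably beating the bound $e^{-\tilde c^*(\rho) x/2}$. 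The main obstacle is the blow-up analysis itself: identifying the correct weights, tracking the unstable manifold of the blown-up saddle across the exceptional sphere into the outer Fenichel regime, and computing the Melnikov derivative in blown-up coordinates.
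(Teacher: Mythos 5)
Your overall architecture matches the paper's: identify the singular heteroclinic at $\e=0$ via the conserved quantity (forcing $c_0=\sqrt[3]{3/2}$), restore hyperbolicity at the degenerate point $(1,c_0,0)$ by geometric desingularization, track the unstable manifold of the (blown-up) stable state to a fixed transversal, and close with an implicit-function/transversality argument in $c$. The eigenvalue computations at $(0,0)$ and at the PDE stable state are correct, and the identification of $c_0$ agrees with Section~\ref{sec:hetero}.

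However, the blow-up you propose does not work, and this is the crux of the whole argument. You take the homogeneous blow-up $(T-1,W-c_0,\e)=r(\bar T,\bar W,\bar\e)$, i.e.\ weights $(1,1,1)$. Two problems. First, you blow up (\ref{eq:scale1}) directly, but that vector field is singular (not merely non-hyperbolic) at $W=c$; one must first rescale the independent variable by the factor $c-W$ to obtain a smooth vector field with a nilpotent equilibrium at $(1,c)$ before any blow-up is meaningful. Second, and more seriously, homogeneous weights fail to desingularize even the smooth system: writing $\tilde T=r\bar T$, $\tilde W=r\bar W$, the equations scale as $\tilde T'=\O(r^2)$ while $\tilde W'=r\bar T+\O(r^2)$, so there is no common power of $r$ to divide out, and on the exceptional sphere $r=0$ one is left with $\bar T'=0$, $\bar W'=\bar T$ --- still nilpotent. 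Concretely, since $w_-(\e)-c=-\e^2c^2/2+\O(\e^4)$, in your rescaling chart $\bar W=(W-c)/\e\to 0$, so the PDE stable state and the artificial equilibrium at $(1,c)$ still coalesce at the origin of the chart; your claim that ``the desingularized PDE stable state becomes a hyperbolic saddle'' is therefore false in these coordinates. The correct choice is the quasi-homogeneous blow-up with weights $3,2,1$ for $(\tilde T,\tilde W,\e)$, determined by the quadratic dependence of $w_-(\e)$ on $\e$ and by balancing the powers of $r$ in the two equations; only then does the stable state appear at $(\bar T,\bar W)=(0,-c^2/2)$, separated from the north pole, as a hyperbolic saddle of a Hamiltonian flow on the sphere. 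A secondary issue: the outer tracking is not a Fenichel slow-manifold argument (the singular heteroclinic of (\ref{eq:reduced}) is not a normally hyperbolic critical manifold of a slow--fast splitting); the paper instead tracks the manifold by the exchange-lemma-type estimate of Lemma~\ref{lem:pi} in the directional chart followed by regular perturbation of the explicit solution curves and the strong stable manifold of the origin.
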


The remainder of the paper is dedicated to proving this Theorem and is organized as follows.  In Section~\ref{sec:prelim} we change coordinates in the traveling wave equation (\ref{eq:scale1}) and collect some facts about this transformed system.  In Section~\ref{sec:hetero} we set $\e=0$ and obtain a candidate traveling front.   This front involves a heteroclinic orbit involving a non-hyperbolic fixed point.  In Section~\ref{sec:blowup} we apply geometric desingularization techniques to  track invariant manifolds near this point.  In Section~\ref{sec:proof} we conclude the proof of Theorem~\ref{thm:main}.  Finally, we present a short discussion in Section~\ref{sec:discussion}.

\section{Preliminaries}\label{sec:prelim}

We begin our analysis by removing the singularity that occurs in (\ref{eq:scale1}) at $W=c$.  This is accomplished by rescaling the independent variable so that the right hand side of (\ref{eq:scale1}) is multiplied by the non-zero factor $c-W$.  Since we are interested in the region where $W<c$ then this quantity is always positive and does not reverse the direction of the flow.  We then arrive at the system 
\begin{align}
    \dot{T} &= -T(W-c)^2-\frac{1}{2}W\varepsilon^2(2c-W)(W-c)\nonumber \\
    \dot{W} &=-T(1-T). \label{eq:TWmainscaling}
\end{align}
System (\ref{eq:TWmainscaling}) has fixed points at $(0,0)$, $(1,c)$ and $(1,w_\pm(\e))$.  The fixed point at $(1,c)$ is an artifact of the re-scaling to remove the singularity at $W=c$.  The fixed points $w_\pm(\e)$ are fixed points of the original system (\ref{eq:mainTW}) transformed to $(T,W)$ variables.  The $w_\pm(\e)$ are roots of the quadratic polynomial 
\[ h(w)=w-c+\frac{1}{2}\e^2 w (2c-w), \]
which can be expressed as 
\[ w_\pm(\e)=c+\frac{1}{\e^2}\pm \frac{1}{\e^2}\sqrt{1+\e^4c^2 }. \]
We are interested in fronts connecting $(1,w_-(\e))$ to $(0,0)$ and note that 
\be w_-(\e)=c-\e^2\frac{c^2}{2} +\O(\e^4). \label{eq:w-exp} \ee
It will be consequential later that $w_-(\e)<c$ and $w_-(\e)\to c$ as $\e\to 0$. 

The linearization of (\ref{eq:TWmainscaling}) at $(0,0)$ has two negative eigenvalues 
\be \mu_\pm(c,\e)=-\frac{c^2}{2}\pm\frac{c^2}{2}\sqrt{1+4\e^2}.\label{eq:mu} \ee
The linearization at $(1,w_-(\e))$ is 
\[ \left(\begin{array}{cc} -(w_-(\e)-c)^2 & -(w_-(\e)-c)h'(w_-(\e)) \\ 1 & 0 \end{array}\right). \] 
Since the trace is negative and the determinant is negative (note $h'(w_-(\e))>0$) we then see that the fixed point $(1,w_-(\e))$ is a saddle with one stable and one unstable eigenvalue.  Regrettably, both these eigenvalues are close to zero so that when $\e=0$ the fixed point is no longer hyperbolic and its linearization is nilpotent.  We emphasize that this lack of hyperbolicity can be traced to the fact that as $\e\to 0$ the fixed point at $(1,w_-(\e))$ coalesces with the fixed point at $(1,c)$ yielding a non-hyperbolic fixed point.  

Nonetheless, for any $0<\e\ll 1$ the unstable manifold $W^u(1,w_-(\e))$ is well defined and can be written as a graph $(T,h_u(c,\e,T))$.  Likewise, for $c$ fixed and $0<\e\ll 1$ the origin has a one dimensional strong stable manifold $W^{ss}(0,0)$ which can also be expressed as a graph $(T,h_s(c,\e,T))$.  We then define the mismatch function 
\be \Phi(c,\e)=h_s\left(c,\e,\frac{1}{2}\right) - h_u\left(c,\e,\frac{1}{2}\right).\label{eq:Phi}  \ee

We will establish the following result.

\begin{theorem}\label{thm:Phifacts} We have the following 
\begin{itemize}
\item The function $\Phi(c,\e)$ is well defined for $\e\in [0,\e_0)$, for some $\e_0>0$ and continuously differentiable in both $c$ and $\e$. 
\item $\Phi\left( \sqrt[3]{\frac{3}{2}},0\right)=0$ 
\item $\partial_c \Phi\left( \sqrt[3]{\frac{3}{2}},0\right)\neq 0 $
\end{itemize}
\end{theorem}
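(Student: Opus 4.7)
My plan proceeds by handling the three claims in turn, with the analytic heavy lifting concealed inside the blow-up machinery of Section~\ref{sec:blowup}. The two algebraic assertions reduce to the observation that the $\e=0$ reduction of (\ref{eq:TWmainscaling}) is integrable in closed form, while the regularity claim follows from standard invariant manifold theory combined with the desingularization.

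For the smoothness of $\Phi(c,\e)$, consider first the strong stable manifold of the origin. The linearization at $(0,0)$ maintains a uniform spectral gap for $c$ near $\sqrt[3]{3/2}$ and all small $\e\geq 0$: at $\e=0$ the eigenvalues are $0$ and $-c^2$, and for $\e>0$ small they are two distinct negative values that stay well separated from each other. The strong stable manifold theorem therefore produces $h_s(c,\e,T)$ that is continuously differentiable in $(c,\e)$ down to $\e=0$. The unstable manifold $W^u(1,w_-(\e))$ is more delicate because the fixed point itself becomes nilpotent at $\e=0$, as already noted in Section~\ref{sec:prelim}. This is precisely where the geometric desingularization of Section~\ref{sec:blowup} enters: after blowing up the degenerate point in $(T,W,\e)$-space hyperbolicity is restored along the blow-up locus, the unstable manifold is tracked through the successive charts, and blow-down yields $h_u(c,\e,T)$ extending continuously differentiably to $\e=0$. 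Subtracting gives the asserted regularity of $\Phi$.

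To evaluate $\Phi$ at $\e=0$, I would set $\e=0$ in (\ref{eq:TWmainscaling}) to obtain the planar flow $\dot T = -T(W-c)^2$, $\dot W = -T(1-T)$, and observe that it admits the first integral
\[ H(T,W) = \frac{(W-c)^3}{3} - T + \frac{T^2}{2}. \]
The strong stable manifold of $(0,0)$ sits on $\{H = -c^3/3\}$, while the $\e=0$ limit of $W^u(1,w_-(\e))$, identified by the blow-up analysis, sits on $\{H = -1/2\}$. Solving each level set equation for $W-c$ at the section $T=1/2$ and taking real cube roots yields closed form values for $h_s(c,0,1/2)$ and $h_u(c,0,1/2)$ whose difference is
\[ \Phi(c,0) = (9/8 - c^3)^{1/3} + (3/8)^{1/3}. \]
This vanishes precisely when $c^3 = 3/2$. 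Differentiating the same expression gives $\partial_c \Phi(c,0) = -c^2\,(9/8 - c^3)^{-2/3}$, which at $c = \sqrt[3]{3/2}$ evaluates to $-2^{4/3}$ and is therefore nonzero.

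The principal obstacle in this program is the smooth extension of $h_u$ down to $\e=0$ and the identification of its limit with the orbit of the $\e=0$ flow on $\{H = -1/2\}$ emanating from $(1,c)$. Everything else is either standard invariant manifold theory (for $h_s$) or elementary algebra on $H$ (for the value and derivative of $\Phi$). The blow-up analysis of Section~\ref{sec:blowup} is designed to supply exactly this missing ingredient, and once it is in place the two computations above close the argument.
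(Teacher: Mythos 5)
Your proposal follows essentially the same route as the paper: smoothness of $h_s$ from the spectral gap at the origin, smoothness of $h_u$ down to $\e=0$ supplied by the blow-up analysis of Section~\ref{sec:blowup}, and the last two bullets by explicit computation on the level sets of the first integral (your values $\Phi(c,0)=(9/8-c^3)^{1/3}+(3/8)^{1/3}$ and $\partial_c\Phi(c^*,0)=-2^{4/3}$ agree with the paper's $g_s$, $g_u$ and Lemma~\ref{lem:IFTcond}). You also correctly identify the smooth extension of $h_u$ to $\e=0$ as the one nontrivial ingredient, which is exactly what the desingularization argument is built to deliver.
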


By an application of the Implicit Function Theorem,  Theorem~\ref{thm:Phifacts} will then imply Theorem~\ref{thm:main}.   We remark that an important piece of establishing Theorem~\ref{thm:Phifacts} is verifying that the function $\Phi(c,\e)$ is well defined in the limit as $\e\to 0$.  

\begin{rmk} In a broader context, we emphasize that pushed fronts are robust with respect to small perturbations in the system.  This is because pushed fronts can typically be expressed as a transverse intersection of manifolds.  These manifolds depend smoothly on parameters and the intersection therefore persists under small perturbations.  The conditions laid out in Theorem~\ref{thm:Phifacts} exactly describe this transverse intersection: condition (ii) in Theorem~\ref{thm:Phifacts} states that these manifolds intersect and condition (iii) implies that this intersection is transverse.  To emphasize once more, the issue with (\ref{eq:TWmainscaling}) is the lack of hyperbolicity of the fixed point $(1,c)$ making the definition of the unstable manifold of $(1,c)$ not well defined a priori.  
\end{rmk}

\section{Reduction to the singular limit  $\e=0$ } \label{sec:hetero}
In this section we study (\ref{eq:TWmainscaling}) with $\e=0$ and construct a heteroclinic orbit that connects the fixed point $(1,c)$ to the one at $(0,0)$.  With $\e=0$ the system reduces to 
\begin{align}
    \dot{T} &= -T(W-c)^2 \nonumber \\
    \dot{W} &=-T(1-T). \label{eq:reduced}
\end{align}
For $0<T<1$ solutions of this system are graphs with respect to the variable $T$ and their solution curves obey the scalar equation 
\begin{align}
    \frac{dW}{dT} = \frac{1-T}{(W-c)^2}.
\end{align}
This equation is separable and can be integrated to find solution curves that satisfy
\begin{equation} \frac{1}{3} (W-c)^3 = T - \frac{T^2}{2} + k, \label{eq:solutioncurves} \end{equation}
where $k$ is a constant of integration.  

For the curve in (\ref{eq:solutioncurves}) to pass through the origin it is required that $k_0=-\frac{c^3}{3}$.  For the curve to pass through the point $(1,c)$ it is required that $k_1=-\frac{1}{2}$.  Therefore, the curves intersect if and only if $c=c^*(0)$ with 
\be c^*(0)=  \sqrt[3]{\frac{3}{2}}. \ee
A plot of this solution curve is provided in Figure~\ref{fig:hetero}.

\begin{figure} 
    \centering
     \subfigure{\includegraphics[width=0.43\textwidth]{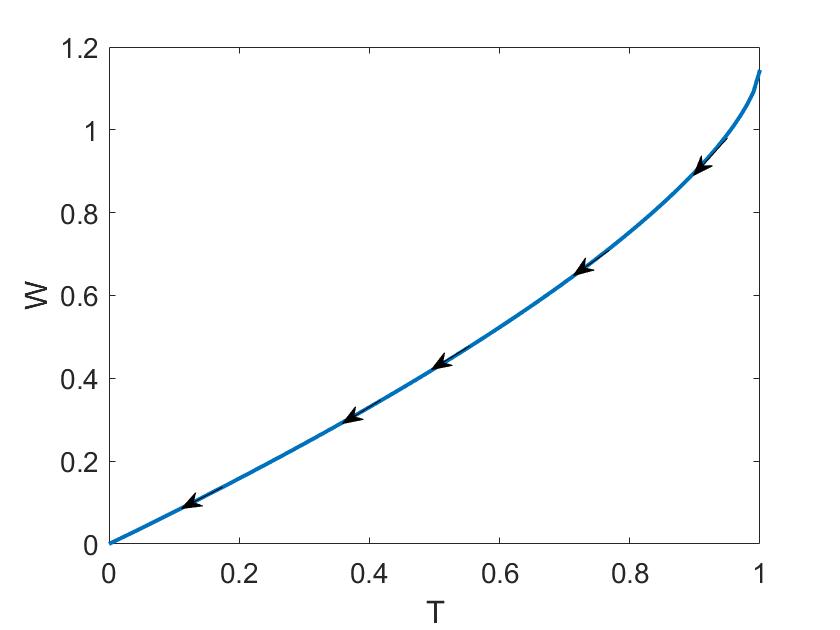}}
 \caption{The heteroclinic orbit obtained for the reduced ($\e=0$) system connecting the fixed point $(1,c)$ to the origin with $c=c^*(0)=\sqrt[3]{\frac{3}{2}}$. }
    \label{fig:hetero}
\end{figure}

For future reference we define the functions 
\[ g_s(c,T)=c-\sqrt[3]{-3T+\frac{3}{2}T^2+c^3},\]
which gives the $W$ component of the graph of the solution curve passing through the origin while we define
\[ g_u(c,T)=c-\sqrt[3]{-3T+\frac{3}{2}T^2+\frac{3}{2}}.\]

We make note of the following fact.

\begin{lemma}\label{lem:IFTcond}
For any $T>0$
\[ \frac{\partial}{\partial c} \left(g_s(c,T)-g_u(c,T)\right)\neq 0. \]
\end{lemma}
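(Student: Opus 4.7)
The plan is to prove this by direct computation, since $g_s$ and $g_u$ are written down explicitly and differ only in the constant inside their respective cube roots. The first step is to observe that the leading $+c$ terms in $g_s$ and $g_u$ cancel exactly in the difference, so
\[ g_s(c,T) - g_u(c,T) = \sqrt[3]{3T - \tfrac{3}{2}T^2 - c^3} - \sqrt[3]{3T - \tfrac{3}{2}T^2 - \tfrac{3}{2}}. \]
The second term has no $c$-dependence at all, so the $c$-derivative acts only on the first cube root.

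Next I would apply the chain rule to the first cube root. Writing $y(c,T) = 3T - \tfrac{3}{2}T^2 - c^3$ and using $\tfrac{d}{dy} y^{1/3} = \tfrac{1}{3} y^{-2/3}$, I get
\[ \frac{\partial}{\partial c}\bigl(g_s(c,T) - g_u(c,T)\bigr) = \frac{-c^2}{\bigl(3T - \tfrac{3}{2}T^2 - c^3\bigr)^{2/3}}. \]
Now I would argue this is nonzero for the relevant range of parameters. In our regime $c$ is near $c^*(0) = \sqrt[3]{3/2} > 0$, so the numerator $-c^2$ is strictly negative. The denominator is a real number raised to the $2/3$ power, hence nonnegative, and it is certainly finite, so the quotient cannot be zero. (If one is concerned about $T$ values where $3T - \tfrac{3}{2}T^2 - c^3 = 0$, the derivative there is $-\infty$ rather than $0$, and $g_s(\,\cdot\,,T)$ has a vertical tangent; in particular the conclusion of the lemma still holds.) In the use case relevant to Theorem~\ref{thm:Phifacts}, namely $T = 1/2$ and $c = \sqrt[3]{3/2}$, the argument of the cube root evaluates to $\tfrac{9}{8} - \tfrac{3}{2} = -\tfrac{3}{8} \neq 0$, so the derivative is cleanly finite and nonzero there.

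There is no real obstacle here: the statement reduces to verifying that a single elementary derivative does not vanish, and the cancellation of the $+c$ terms in $g_s - g_u$ makes the computation immediate. The only mild subtlety is clarifying what one means by "nonzero" at points where the cube root has a vertical tangent, which is handled as above.
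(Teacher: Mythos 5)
Your computation is exactly the one the paper gives: both reduce the claim to the single derivative $-c^2/(3T-\tfrac{3}{2}T^2-c^3)^{2/3}\neq 0$ and observe it cannot vanish for $c\neq 0$. Your extra remarks on the degenerate point where the cube root has a vertical tangent and on the evaluation at $T=\tfrac12$, $c=\sqrt[3]{3/2}$ go slightly beyond the paper's one-line proof but do not change the argument.
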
 
\begin{proof}
We compute
\[ \frac{\partial}{\partial c} \left(g_s(c,T)-g_u(c,T)\right)=-\frac{c^2}{(-3T+\frac{3}{2}T^2+c^3)^{2/3}}\neq 0. \]
\end{proof}

\section{A quasi-homogeneous blow-up of the nilpotent fixed point $(1,c)$} \label{sec:blowup}
When $\e=0$ the fixed point $(1,c)$ is not hyperbolic.  To prove Theorem~\ref{thm:main} we need to show that the unstable manifold of $(1,w_-(\e))$ is $o(1)$ close to the heteroclinic orbit computed in Section~\ref{sec:hetero}.  To do this we will use geometric desingularization to blow-up and then desingularize the flow near this fixed point.  

To begin, we first transform the fixed point at $(1,c)$ to the origin using 
\[ \Tilde{W} = W - c, \quad \Tilde{T}=T-1, \]
from which we obtain the system of equations (appending an equation for  the parameter $\e$)
\begin{align}
  \dot{\Tilde{T}}' &= - \Tilde{W}^2 (\Tilde{T} + 1) -  \frac{1}{2}\Tilde{W}\varepsilon^2 (c^2 - \Tilde{W}^2) \nonumber \\
  \dot{\Tilde{W}} &= \Tilde{T}(\Tilde{T} + 1)  \nonumber \\
\dot{\e} &= 0. \label{eq:TWtildes} 
\end{align}
The linearization at the origin is nilpotent with Jacobian
\[ Df(0,0,0)=\left(\begin{array}{ccc} 0 & 0 & 0  \\ 1 & 0 & 0 \\ 0 & 0 & 0\end{array}\right).\]

The goal of this section is to blow-up the non-hyperbolic origin in (\ref{eq:TWtildes}) to a sphere where -- after desingularizing by a re-scaling of the independent variable -- the flow in a neighborhood of the origin can be analyzed by studying the  flow on the sphere.   In the simplest case, blow-up techniques involve viewing (\ref{eq:TWtildes})  in spherical coordinates $(\rho,\theta,\phi)$ where after desingularizing one obtains non-trivial dynamics on the surface of the sphere ($\rho=0$).   However, use of such coordinates for  (\ref{eq:TWtildes}) reveals fixed points on the surface of the sphere which remain non-hyperbolic.  

We therefore employ a quasi-homogeneous blow-up where different scalings are given to different variables; see \cite{alvarez11,dumortier92,jk21,kuehn15} for examples and a general discussion.  The change of variables that we employ is defined as
\[ \psi:\mathbb{S}^2\times [0,\infty) \to \mathbb{R}^3,\quad \psi\left(\bar{T},\bar{W},\bar{\e},r\right)=\left(r^3\bar{T},r^2\bar{W},r\bar{\e}\right),\]
where $(\bar{T},\bar{W},\bar{\e})\in\mathbb{S}^2$.  We comment on the choice of weights.  Since the fixed point $w_-(\e)$ depends quadratically on $\e$ then it is natural that the weight for the $\tilde{W}$ component should be squared relative to the weight for $\e$.   The cubic scaling for the $\tilde{T}$ component can be determined as follows.  Let $\tilde{T}=r^\alpha \bar{T}$. Then the leading order terms on the right hand side of (\ref{eq:TWtildes}) scale with $r^4$ for $\tilde{T}'$ equation and $r^\alpha$ for the $\tilde{W}'$ equation.  Thus the scaling of the two equations match only if $4-\alpha=\alpha-2$ from which $\alpha=3$ is determined.

The variables $\left(\bar{W},\bar{T},\bar{\e}\right)\in\mathbb{S}^2$ could be expressed in terms of spherical coordinates; however in practice it is easier to use various coordinate charts in which to view the dynamics.  We will employ the following two charts for our analysis
\begin{eqnarray*}
K_\e : &\ & \tilde{T}=\rrrrr^3 \TTTTT, \ \tilde{W}= -\rrrrr^2\WWWWW, \ \e=\rrrrr \\
K_W :&\ & \tilde{T}=r_2^3 T_2, \ \tilde{W}= -r_2^2, \ \e=r_2\e_2,
\end{eqnarray*}
where we will assume $r_{1,2}\geq 0$, $W_1\geq 0$, $T_{1,2}\leq 0$ and $\e_2\geq 0$.  

Transition maps between the two charts are
\be r_2=\rrrrr \WWWWW^{1/2}, \ T_2=\frac{\TTTTT}{\WWWWW^{3/2}}, \ \e_2=\frac{1}{\WWWWW^{1/2}}, \ee
\be \TTTTT=\frac{T_2}{\e_2^3}, \ \WWWWW=\frac{1}{\e_2^2}, \ \rrrrr=r_2\e_2. \label{eq:1to3} \ee

\subsection{Chart $K_\e$} \label{sec:Keps}
Chart $K_\e$ is referred to as the re-scaling chart.  Equation (\ref{eq:TWtildes}) expressed in the coordinates of chart $K_\e$ are
\begin{eqnarray}
\dot{\TTTTT}&=&-\rrrrr \WWWWW^2\left(1+\rrrrr^3\TTTTT\right)+\frac{1}{2}\rrrrr \WWWWW\left(c^2-\rrrrr^4\WWWWW^2\right) \nonumber \\
\dot{\WWWWW}&=& -\rrrrr \TTTTT-\rrrrr^4\TTTTT^2 \nonumber \\
\dot{\rrrrr}&=& 0 .
\end{eqnarray}
Dividing the right side of the equation by the common factor $\rrrrr$ through a rescaling of the independent variable we obtain the desingularized equations 
\begin{eqnarray}
\frac{d\TTTTT}{dt_1}&=&- \WWWWW^2\left(1+\rrrrr^3\TTTTT\right)+\frac{1}{2} \WWWWW\left(c^2-\rrrrr^4\WWWWW^2\right) \nonumber \\
\frac{d\WWWWW}{dt_1}&=& - \TTTTT-\rrrrr^3\TTTTT^2 \nonumber \\
\frac{d\rrrrr}{dt_1}&=& 0 .
\end{eqnarray}
The invariant subspace $\rrrrr=0$ corresponds to the flow on the surface of the sphere in blown-up coordinates where
\begin{eqnarray}
\frac{d\TTTTT}{dt_1}&=&- \WWWWW^2+\frac{c^2}{2} \WWWWW \nonumber \\
\frac{d\WWWWW}{dt_1}&=& - \TTTTT. \label{eq:Keonsphere}
\end{eqnarray}
Equation (\ref{eq:Keonsphere}) has fixed points at $(0,0)$ (a center) and at $\left(0,\frac{c^2}{2}\right)$ (a saddle).  The saddle fixed point in this chart corresponds to the fixed point $(1,w_-(0))$ while the fixed point at the origin is an artifact of the desingularization that removed the singularity at $(1,c)$ in the original coordinates. Thus, the blow-up technique is able to differentiate between these two fixed points in the limit as $\e\to 0$ and the fixed point at $(1,w_-(0))$ becomes hyperbolic when viewed in this chart. 

System (\ref{eq:Keonsphere}) is also Hamiltonian with
\[ H(\TTTTT,\WWWWW)=\frac{\WWWWW^3}{3}-\frac{c^2}{4} \WWWWW^2-\frac{\TTTTT^2}{2}. \]
The unstable manifold of $\left(0,\frac{c^2}{2}\right)$ is contained in the level set of $H(\TTTTT,\WWWWW)=-\frac{1}{48}c^6$.  This level curve is plotted in Figure~\ref{fig:hamiltonian} for $c= \sqrt[3]{\frac{3}{2}}$.  

\begin{figure} 
    \centering
     \subfigure{\includegraphics[width=0.43\textwidth]{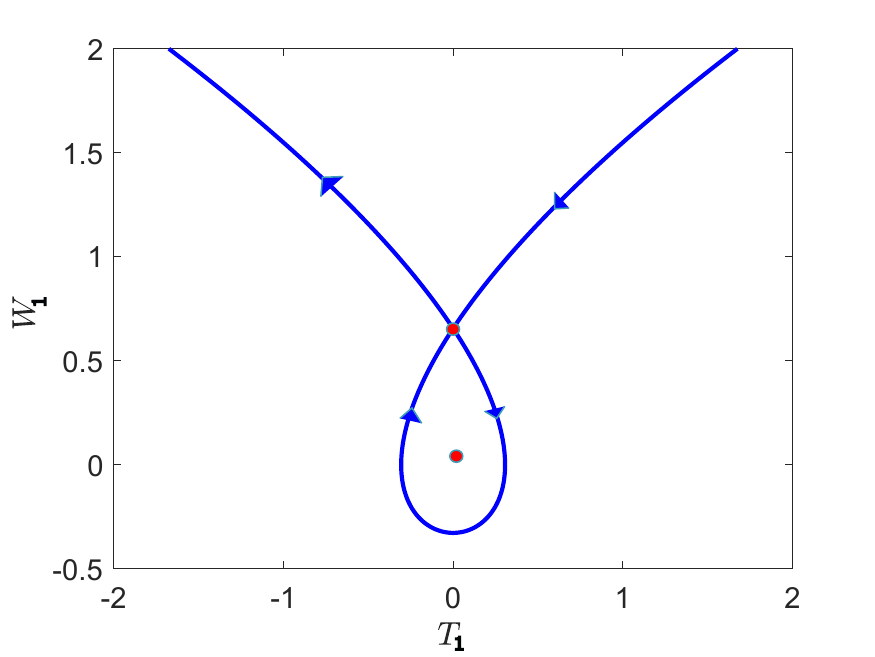}}
 \caption{The dynamics in chart $K_\e$ for $c= \sqrt[3]{\frac{3}{2}}$.  The system is Hamiltonian and there exists an explicit expression for the graph of the unstable manifold that exists for $\TTTTT<0$.   The unstable manifold in upper left quadrant will end up defining the singular orbit in the blow-up system. } 
    \label{fig:hamiltonian}
\end{figure}

\subsection{Chart $K_W$} 
We now consider the dynamics in the chart $K_W$.  Equation (\ref{eq:TWtildes}) transformed to the coordinates of this chart are given by
\begin{eqnarray}
\dot{r_2}&=& -\frac{1}{2}r_2^2T_2-\frac{1}{2}r_2^5T_2^2 \nonumber \\
\dot{\e_2}&=& \frac{1}{2}r_2\e_2T_2+\frac{1}{2}r_2^4\e_2T_2^2 \nonumber \\
\dot{T_2}&=& -r_2(1+r_2^3T_2)+\frac{1}{2}r_2 \e_2^2 \left(c^2-r_2^4\right)+\frac{3}{2}r_2T_2^2+\frac{3}{2}r_2^4T_2^3 . \label{eq:chart1first}
\end{eqnarray}
The vector field can be desingularized by dividing the right side by the common factor $r_2$ after which (\ref{eq:chart1first}) is transformed to 
\begin{eqnarray}
\frac{dr_2}{dt_2}&=& -\frac{1}{2}r_2T_2-\frac{1}{2}r_2^4T_2^2 \nonumber \\
\frac{d\e_2}{dt_2}&=& \frac{1}{2}\e_2T_2+\frac{1}{2}r_2^3\e_2T_2^2 \nonumber \\
\frac{dT_2}{dt_2}&=& -(1+r_2^3T_2)+\frac{1}{2} \e_2^2 \left(c^2-r_2^4\right)+\frac{3}{2}T_2^2+\frac{3}{2}r_2^3T_2^3 . \label{eq:1desing}
\end{eqnarray}
\paragraph{The invariant subspace $r_2=0$}
Restricting to the invariant subspace $r_2=0$ we obtain the system
\begin{eqnarray}
\frac{d\e_2}{dt_2}&=& \frac{1}{2}\e_2T_2. \nonumber \\
\frac{dT_2}{dt_2}&=& -1+\frac{1}{2} \e_2^2c^2+\frac{3}{2}T_2^2. 
\end{eqnarray}
This system has four fixed points: $\left(0,\pm\sqrt{\frac{2}{3}}\right)$ and $\left(\pm\sqrt{\frac{2}{c^2}},0\right)$.  Using $(\ref{eq:1to3})$ we see that the fixed point $\left(\sqrt{\frac{2}{c^2}},0\right)$ is equivalent to $(1,w_-(0))$ in the original coordinates and $\left(\frac{c^2}{2},0\right)$ in the chart $K_\e$.  The fixed point $(0,-\sqrt{2/3})$ is stable with eigenvalues $-3\sqrt{2/3}$ and $-\frac{1}{2}\sqrt{2/3}$ (note the $6:1$ resonance).

\paragraph{The invariant subspace $\e_2=0$} 
The dynamics within the invariant $\e_2=0$ subspace  in chart $K_W$ are given by 
\begin{eqnarray*}
\dot{r_2} &=& -\frac{1}{2} r_2^2T_2 (1+r_2^3T_2) \\
\dot{T_2} &=& r_2\left(-1+\frac{3}{2}T_2^2\right)(1+r_2^3T_2).
\end{eqnarray*}
  Near the fixed point $(r_2,T_2)=(0,-\sqrt{2/3})$ we can divide the right hand side of the vector field by the quantity $r_2(1+r_2^3T_2)$ and obtain the system
\begin{eqnarray*}
\frac{dr_2}{dt_3} &=& -\frac{1}{2} r_2T_2  \\
\frac{dT_2}{dt_3} &=& \left(-1+\frac{3}{2}T_2^2\right).
\end{eqnarray*}
The fixed point $(r_2,T_2)=(0,-\sqrt{2/3})$ is a saddle and the graph of the unstable manifold is simply  the line $T_2=-\sqrt{2/3}$.

Converting this back to the coordinates $(T,W)$ this manifold is expressed as the graph
\[ T=1-\sqrt{\frac{2}{3}}r_2^3, \quad W=c-r_2^2.\]
Using these identities we then find an implicit equation relating $T$ and $W$
\be (c-W)^3=\frac{3}{2}(1-T)^2.\label{eq:implicitagain} \ee
This is exactly the same implicit relation given in (\ref{eq:solutioncurves}) with $k=-\frac{1}{2}$ implying that the unstable manifold of $(0,0,-\sqrt{2/3})$, when transformed back to the original variables $(T,W)$ is described by the graph  $W=g_u\left(c,T\right)$.

\paragraph{The transition map}

To analyze the transition map for (\ref{eq:1desing}) we divide the vector field by the (locally) positive factor $\frac{1}{2}(-T_2-r_2^3T_2^2)$ effectively introducing a new independent variable, $\sigma$. We then shift the fixed point to the origin $S_2=T_2+\sqrt{\frac{2}{3}}$ after which (\ref{eq:1desing}) takes the form 
\begin{eqnarray}
\frac{dr_2}{d\sigma}&=& r_2 \nonumber \\
\frac{d\e_2}{d\sigma}&=& -\e_2 \nonumber \\
\frac{dS_2}{d\sigma}&=&-6S_2+ S_2 G_1(r_2,S_2)+\e_2 G_2(r_2,\e_2,S_2), \label{eq:1desing2}
\end{eqnarray}
Here $G_1$ and $G_2$ are nonlinear functions that represent the nonlinearity after the change of coordinates.  In these coordinates the unstable manifold of the origin is the $r_2$ axis, so that there are no nonlinear terms in the equation for $S_2$ that depend only on $r_2$.  We further have that the function $G_1(0,0)=0$ since it contains only nonlinear terms and $G_2(0,0,0)=0$ since the linear term $\e_2$ is absent from (\ref{eq:1desing2}).


For $\kappa>0$ define the sections $\Sigma_{in}$ and $\Sigma_{out}$ as 
\[ \Sigma_{in}=\{ (r_2,\e_2,S_2)\ | \ \e_2=\kappa\}, \  \Sigma_{out}=\{ (r_2,\e_2,S_2)\ | \ r_2=\kappa\}.\]
We study the transition map 
\[ \pi:\Sigma_{in}\to\Sigma_{out},\ \quad (r_2(0),\kappa,S_2(0))\to (\kappa,\e_2(\sigma_{out}),S_2(\sigma_{out})), \] 
for some transition time $\sigma_{out}$.  
Analysis of the map is possible using a Shilnikov type analysis; see for example \cite{deng89,shilnikov70}.  We have the following result. 

\begin{lemma}\label{lem:pi} Consider (\ref{eq:1desing2}) with $r_2(0)=\e\Gamma(c,\e;\kappa)$ and $S_2(0)=\Omega(c,\e;\kappa)$ for some functions $\Gamma(c,\e;\kappa)$ and $\Omega(c,\e;\kappa)$, both smooth in $c$ and $\e$.  Then if $\kappa$ is sufficiently small and  $|\Omega(c,\e;\kappa)|<\frac{\kappa}{2}$ then  it holds that 
\[ \pi(\e\Gamma(c,\e;\kappa),\kappa,\Omega(c,\e;\kappa))=\left(\kappa, \e\Gamma(c,\e;\kappa),\e \Xi(c,\e;\kappa)\right) \]
for some continuously differentiable function $\Xi(c,\e;\kappa)$.
\end{lemma}

\begin{rmk} The fixed point in (\ref{eq:1desing2}) is hyperbolic and so we could linearize the system by a near-identity change of coordinates using Hartman-Grobman Theorem.  Resonances between the eigenvalues limit the smoothness of the conjugacy mapping and, even still, the mere existence of a near-identity conjugacy is not sufficient to obtain the estimates that we desire.  
\end{rmk}
\begin{proof}
Proofs for more general Shilnikov type problems of this form can be found in \cite{deng89,shilnikov70}.  We present some elements of the proof here.  
Take the integral form of (\ref{eq:1desing2}),
\begin{eqnarray} S_2(\sigma)&=&e^{-6\sigma}S_2(0)+\int_0^\sigma e^{-6(\sigma-\tau)}S_2(\tau) G_1\left(\e\Gamma(c,\e)e^\tau,S_2(\tau)\right)\mathrm{d}\tau \nonumber \\
&+&\kappa \int_0^\sigma e^{-6(\sigma-\tau)} e^{-\tau} G_2 \left(\e\Gamma(c,\e)e^\tau,\kappa e^{-\tau}, S_2(\tau)\right)\mathrm{d}\tau. \label{eq:Qdef}
\end{eqnarray}
The transition ``time" required for the solution to pass from $\Sigma_{in}$ to $\Sigma_{out}$ is 
\[ \sigma^*=\log\left(\frac{\kappa}{\e\Gamma(c,\e;\kappa)}\right).\]
We need estimates on the solution of this equation on the interval $[0,\sigma^*]$. Define $\Psi:X\to X$ as the right hand side of (\ref{eq:Qdef}) and let $X$ be the  Banach Space 
\[ X=C^0([0,\sigma^*],[-\kappa,\kappa]), \ ||S||_X=\sup_{0<\sigma<\sigma^*} |e^{\sigma}S(\sigma)|. \] 
Then for any $S\in X$ we have
\begin{eqnarray} \left| e^\sigma \Psi Q\right|&\leq& \left| e^{-5\sigma}Q(0)\right|+ \int_0^\sigma e^{-5(\sigma-\tau)} ||S||_X |G_1(\tau)| \mathrm{d} \tau\\
&+& \kappa \int_0^\sigma e^{-5(\sigma-\tau)} |G_2(\tau)| \mathrm{d}\tau.
\end{eqnarray}
Thus 
\[ ||\Psi S||_X\leq |\Omega(c,\e;\kappa)| +C_1 \kappa ||S||_X+C_2 \kappa^2, \] 
and there is a $\kappa$ sufficiently small such that $\Psi:X\to X$.  
Next we show that $\Psi$ is a contraction on $X$.  Consider $S_a$ and $S_b$, both in $X$.  Then 
\[ \left| e^\sigma \Psi (S_a-S_b)\right| \leq L_1\kappa  \int_0^\sigma e^{-5(\sigma-\tau)} ||S_a-S_b||_X \mathrm{d} \tau + \kappa L_2 \int_0^\sigma e^{-5(\sigma-\tau)} ||S_a-S_b||_X \mathrm{d}\tau,
\]
where $L_1$ is the Lipschitz constant of $G_1$ and $L_2$ is the Lipschitz constant for $G_2(r_2,\e_2,S_2)-G_\e(r_2,\e_2,0)$.  We then have 
\[ ||\Psi(S_a-S_b)||_X\leq C\kappa ||S_a-S_b||_X \]
and $\Psi$ is a contraction for $\kappa$ sufficiently small. 

Now consider the $S_2$ coordinate of the transition map $\pi$.  Since $S_2\in X$ we have that 
\[ |S_2(\sigma^*)|\leq e^{-\sigma^*}||S_2||_X \]
 and thus $S_2(\sigma^*)=\e\Xi(c,\e;\kappa)$ for some function $\Xi(c,\e;\kappa)$. By Theorem 8.1 of \cite{deng89} the solution of the boundary value problem (\ref{eq:1desing2}) depends smoothly on the parameters $c$ and $\e$ as well as the initial conditions.

\end{proof}

\subsection{Tracking $W^u(1,w_-(\e))$ between charts} 
We now put together the analysis in the two charts above to track $W^u(1,w_-(\e))$.  In the re-scaling chart $K_\e$ the equilibrium point $(1,w_-(\e))$ is mapped to (for $\e=0$ ) the fixed point $\left(0,\frac{c^2}{2}\right)$.  This fixed point is hyperbolic of saddle type and has a one-dimensional unstable manifold.  The fixed point and its unstable manifold persist for $\e>0$ and sufficiently small.  Using the Hamiltonian function in chart $K_\e$ we find that the unstable manifold (when $\e=0$) is expressed as the graph
\[ \TTTTT=-\sqrt{\frac{2}{3}\WWWWW^3-\frac{c^2}{4} \WWWWW^2+\frac{1}{24}c^6}.\]
By smooth dependence on initial conditions and parameters the unstable manifold for $\e\neq 0$ can be described by the graph
\[ \TTTTT=-\sqrt{\frac{2}{3}\WWWWW^3-\frac{c^2}{4} \WWWWW^2+\frac{1}{24}c^6}+\e \Delta(\WWWWW,c,\e),\]
for some smooth function $\Delta$.  

When $\WWWWW=\frac{1}{\kappa^2}$ then 
\[ \TTTTT=-\sqrt{\frac{2}{3}\frac{1}{\kappa^6}-\frac{c^2}{4} \frac{1}{\kappa^4}+\frac{1}{24}c^6}+\e\Delta\left(\frac{1}{\kappa^2},c,\e\right). \]
Transforming to chart $K_W$ using (\ref{eq:1to3}) then we have $\e_2=\kappa$, $r_2=\e\kappa$ and 
\[ T_2=-\sqrt{\frac{2}{3}-\frac{c^2}{4} \kappa^2 +\frac{1}{24}(\kappa c)^6}+\e\kappa^3 \Delta\left(\frac{1}{\kappa^2},c,\e\right).\]
Then
\[ S_2=\sqrt{\frac{2}{3}}-\sqrt{\frac{2}{3}-\frac{c^2}{4} \kappa^2 +\frac{1}{24}(\kappa c)^6}+\e\kappa^3 \Delta\left(\frac{1}{\kappa^2},c,\e\right), \]
and we observe that $S_2=\Omega(c,\e,\kappa)$ with
\[ \Omega(c,\e,\kappa)=\sqrt{\frac{1}{6}}\frac{c^2}{4}\kappa^2+\kappa^3 \Pi(\kappa,c,\e), \]  
for some smooth function $\Pi$.  

Now applying the transition map $\pi:\Sigma_{in}\to\Sigma_{out}$ in Lemma~\ref{lem:pi} we can track $W^u(1,w_-(\e))$ to $\Sigma_{out}$ where it has the  expansion
\[ r_2=\kappa, \ T_2=-\sqrt{\frac{2}{3}}+\e\Xi(c,\e), \ \e_2=\e \Gamma(c,\e). \]

\begin{figure} 
    \centering
     \subfigure{\includegraphics[width=0.43\textwidth]{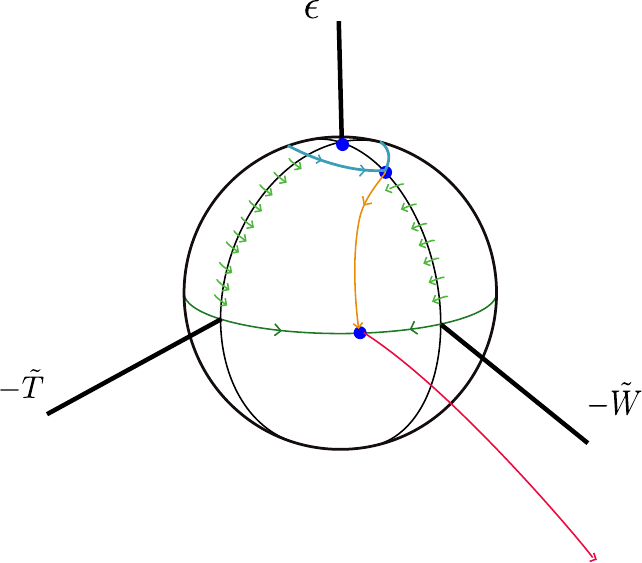}}
 \caption{ The relevant dynamics of the blown up fixed point at $(1,c)$.  The $\e=0$ subspace is invariant and we have shown the existence of a heteroclinic connecting this fixed point to the origin in Section~\ref{sec:hetero}.  This is depicted by the red curve.  On the upper half of the sphere two fixed points are shown.  The one at the north pole is an artificial fixed point that arises when the singularity occurring at $W=c$ in (\ref{eq:scale1}) is removed.  The other fixed point corresponds to the steady state $(1,w_-(0))$.  In the blown-up coordinates this fixed point is hyperbolic and we are able to track its unstable manifold and show that it is heteroclinic to the fixed point at the equator of the sphere.  Obtaining estimates on the dynamics for $0<\e\ll 1$ we are able to show that the unstable manifold of $(1,w_-(\e))$ is $\O(\e)$ close to the reduced heteroclinic shown in red.   }
    \label{fig:sphere}
\end{figure}

\begin{rmk} In theory, all of the analysis required in this section could have been conducted solely in chart $K_W$ as both the relevant fixed points are visible in that chart.  However, identifying the heteroclinic connecting the two fixed points is more straightforward in the re-scaling chart $K_\e$ and so we opt to utilize both charts here. 
\end{rmk}

\section{Proof of Theorem~\ref{thm:main}}\label{sec:proof}

In this section we prove Theorem~\ref{thm:main} by verifying that the conditions of the Implicit Function Theorem outlined in Theorem~\ref{thm:Phifacts} are satisfied. 

By the analysis in the previous section we have that the unstable manifold of $W^u(1,w_-(\e))$ is well defined and its graph is $\O(\e)$ close to $(T,g_u(c,T))$;  see the discussion following (\ref{eq:implicitagain}).  On the other hand, $W^{ss}(0,0)$ is well defined and depends smoothly on $c$ and $\e$ and for any $0<T<1$ is $\O(\e)$ close to $(T,g_s(c,T))$.  Thus, recalling the definition of $\Phi(c,\e)$ in (\ref{eq:Phi}) we have
\[ \Phi(c,\e)=g_s\left(c,\frac{1}{2}\right)-g_u\left(c,\frac{1}{2}\right)+\e R(c,\e) \]
where $R(c,\e)$ is smooth.  

When $\e=0$ and $c^*=\sqrt[3]{\frac{3}{2}}$ then by the analysis presented in Section~\ref{sec:hetero} we have that $\Phi(c^*,0)=0$ and furthermore by Lemma~\ref{lem:IFTcond} we have that $\partial_c \Phi(c^*,0)\neq 0$.  

Thus $\Phi(c,\e)$ satisfies the three conditions set out in Theorem~\ref{thm:Phifacts} necessary for an application of the Implicit Function Theorem.  This gives the existence of a smooth function $c^*(\e)$ such that (\ref{eq:RDA}) with $\nu=0$ has a traveling front solution with steep exponential decay that propagates with speed $c=c^*(\e)$ and for $\rho=\frac{1}{\e^3}$.  

Monotonicity of the front was previously established in \cite{bramburger21}.  The decay estimate in Theorem~\ref{thm:main} follows since the front lies in $W^{ss}(0,0)$.  This decay rate is specified in (\ref{eq:mu}) and after twice re-scaling the independent variable -- first to divide the right hand side of the differential equation by a factor of $c-W$ and second to re-scale the independent variable back to $x$ -- then the estimate is obtained.  

\section{Discussion}\label{sec:discussion}

We have constructed traveling front solutions of the system of reaction-diffusion-advection equations in (\ref{eq:RDA}) for $\nu=0$ and in the limit $\rho\to\infty$.   Our approach was to construct fronts directly using singular perturbation techniques.  In the original traveling wave coordinates the profile of the $u$ component becomes asymptotically large necessitating  a rescaling of this dependent variable.  This rescaling introduces a lack of hyperbolicity into the problem which can be handled using geometric desingularization techniques.  

Our results confirm a conjecture presented in \cite{bramburger21} regarding the scaling of the critical speed as $\rho\to\infty$ in the inviscid case; $\nu=0$.  We comment briefly on extensions to the viscid case.  The traveling wave equations in this case naturally comprise a system of four coupled ordinary differential equations.  This system again has a conserved quantity (see Section 4 of \cite{bramburger21}) and therefore the system reduces to a three-dimensional differential equation which we write as follows:
\begin{eqnarray}
\dot{T}&=& T (W-c) +\e^2 Z \nonumber \\
\nu \dot{W} &=& -\frac{W}{2}\left(2c-W\right)+Z \nonumber \\
\dot{Z}&=& T(T-1). \label{eq:nunonzero}
\end{eqnarray}
To see the connection with the reduced system  (\ref{eq:scale1}) note that when $\nu=0$ we have $Z=\frac{W}{2}\left(2c-W\right)$ and $\dot{Z}=(c-W)\dot{W}$.   We first consider the case of $\e$ (equivalently $\rho$) fixed and $\nu\ll 1$.  In this regime, $Z=\frac{W}{2}\left(2c-W\right)$ is a slow manifold which is normally hyperbolic when $W<c$.  By Fenichel's persistence theorem; see \cite{fenichel79}, the manifold will persist for $\nu>0$ and sufficiently small.  Therefore, we would expect that any pushed or pulled fronts constructed in the $\nu=0$ limit (and for finite $\rho$) should perturb smoothly to pushed or pulled fronts in the case of $\nu>0$.   Note that this does not include the case of $\rho\to \infty$ as the fronts will limit on the non-hyperbolic point of the slow manifold at $W=c$ in this case.  

Beyond the regime of $\nu$ small the challenge in the analysis of (\ref{eq:nunonzero}) lies in the construction of fronts in the reduced system $\e=0$.  Recall that the planar system obtained when $\e=0$ in (\ref{eq:reduced}) could be simplified to a scalar equation with explicit solution.  We have not been able to obtain a similar explicit solution in the case of $\nu>0$.  It is interesting to note that for $\nu>0$ the linearization of (\ref{eq:nunonzero}) at the reduced fixed point $(1,c,c^2/2)$ is 
\[ Df\left(1,c,\frac{c^2}{2}\right)=\left(\begin{array}{ccc} 0 & 1 & 0 \\ 0 & 0& \frac{1}{\nu}\\1 & 0 & 0 \end{array}\right),\]
the eigenvalues of which are the cube roots of $\frac{1}{\nu}$ and therefore the fixed point is hyperbolic for $\nu>0$ with a one-dimensional unstable manifold.  The fixed point at the origin has two (strong)-stable eigenvalues. Therefore, while one piece of the analysis of fronts for $\nu>0$ is significantly more difficult (the construction of the reduced heteroclinic), the persistence of this front for $\nu>0$ is more straightforward.  

We have referred to the constructed front as a pushed front throughout our analysis.  Returning to the discussion of what it means for a front to be selected in the introduction we note that the constructed front satisfies both the slowest monotone/positive front criteria (at least locally in the wavespeed parameter $c$).  Another direction for possible future research would be into the stability of the front constructed in Theorem~\ref{thm:main}.  We anticipate that this front is asympotically stable in an exponentially weighted function space with an isolated marginally stable eigenvalue at zero due to translational invariance.  That this translational eigenvalue has steep enough decay to belong to the exponentially weighted space was the primary motivation for seeking fronts that lie in the strong-stable manifold of the origin.  The spectral stability problem can be studied via a system of three coupled non-autonomous linear differential equations depending both the front profile and the spectral parameter $\lambda$.  In contrast to the existence problem, we do not expect this equation to have a conserved quantity so that a fully three dimensional analysis would be required.

\section*{Acknowledgements} This project was conducted as part of a semester-long undergraduate research project hosted by the Mason
Experimental Geometry Lab (MEGL). 

\section*{Funding} The research of M.H. was partially supported by the National Science Foundation (DMS-2007759). 

\section*{Author Contribution} All authors contributed to the research. M.H. wrote the manuscript.

\section*{Conflict of Interest} The authors declare that they have no competing interests.

\bibliographystyle{abbrv}
\bibliography{GSPTbib}

\end{document}